\newtheorem{theorem}{Theorem}[section]
\newtheorem{proposition}{Proposition}[section]
\newtheorem{lemma}{Lemma}[section]
\numberwithin{equation}{section}
\renewcommand\today{\number\year/\number\month/\number\day}
\def\qed{\hfill$\Box$\medskip}
\begin{document}

\noindent\makebox[60mm][1]{\tt {\large Version:~\today}}

\bigskip
\noindent{
{\Large\bf  Scaling limit of the local time of the $(1,L)-$random
walk}\footnote{
\noindent  Supported by NSFC (NO.11131003) and  985 Program.
}
\\

\noindent{
Wenming Hong\footnote{ School of Mathematical Sciences
\& Laboratory of Mathematics and Complex Systems, Beijing Normal
University, Beijing 100875, P.R. China. Email: wmhong@bnu.edu.cn} ~  Hui Yang\footnote{ School of Mathematical Sciences
\& Laboratory of Mathematics and Complex Systems, Beijing Normal
University, Beijing 100875, P.R. China. Email: yanghui2011@mail.bnu.edu.cn}

\noindent{
(Beijing Normal University)
}

}

\vspace{0.1 true cm}

\begin{center}
\begin{minipage}[c]{12cm}
\begin{center}\textbf{Abstract}\end{center}
\bigskip

It is well known (Donsker's Invariance Principle) that the random walk converges to Brownian motion by scaling.
 In this paper, we
will prove that the scaled local time of the $(1,L)-$random walk
converges to that of the Brownian motion. The results was proved by
Rogers (1984) in the case $L=1$. Our proof is based on the intrinsic multiple branching structure within the $(1,L)-$random walk revealed by Hong and Wang ( 2013).\\

\mbox{}\textbf{Keywords:}\quad random walk, multi-type branching process, local time, Brownian motion.\\
\mbox{}\textbf{Mathematics Subject Classification}:  Primary 60J80;
secondary 60G50.
\end{minipage}
\end{center}


\section{ Introduction and Main Results\label{s1}}

 Donsker's Invariance Principle tells us that the  random walk converge to the
Brownian motion by proper space and time scaling. It is naturally to consider the scaling limit of  the  local times. One can not get it directly from the continuous theorem because the local time is not a continuous function of the Brownian motion. For the simple random walk (i.e., $L=1$, the nearest random walk), Rogers (\cite{R},1984) confirmed the result based on the branching structure within the simple symmetric random walk first introduced by Dwass (\cite{D75}, 1975) and the convergence of the scaling branching processes demonstrated by  Lamperti (\cite{La67}, 1967) and Lindvall (\cite{Lin72},1972), combining the Ray-Knight Theorem. ~In the present paper we consider  the   $(1,L)-$random walk. At first we can express the local time as a linear function of the intrinsic multi-type branching processes within the $(1,L)-$random walk, which has been revealed  by Hong and Wang (\cite{HW}, 2013) recently. After that, We obtain  the scaling limit following the usual schedule by proving the convergence of the finite dimensional distribution and the tightness.

We consider the $(1,L)-$ random walk on the half line reflected at 0 , i.e., a Markov chain $ \{X_n\}_{n\geq 0}$ on $\mathbb{Z^{+}}=\{0, 1, 2, \ldots \}$ with $X_0=0$ and the transition probabilities specified by,
for $n\geq 0$,~$i\geq 1$,
\begin{flalign}\label{a}
 P(X_{n+1}=1|X_{n}=0)& =1\nonumber\\
P(X_{n+1}=i+l|X_{n}=i)& =
\begin{cases}
p_{l}, & \mbox{for $l=1,\cdots, L $}, \\
q, & \mbox{for $l=-1$}.
\end{cases}
\end{flalign}
where $p_{1}+p_{2}+\cdots+p_L+q=1$, $ 0<p_{1},p_{2},\cdots,p_L,q<1$ and ``symmetric"
\begin{equation}\label{00}
EX_{n}=p_{1}+2p_{2}+\cdots+ L\cdot  p_L-q=0.
\end{equation}  Obviously, this Markov chain  is irreducible and recurrent. For any position $j\geq 0$, the {\it local time} $L(j; n)$ at $j$   is defined as the visiting number by the Markov chain $\{X_n\}_{n\geq 0}$ before   $n$,
\begin{flalign}\label{lt}
L(j; n)&=\#\{0\leq r\leq n:X_{r}=j\} \quad \text{
    for } j,n \geq 0.
\end{flalign}
Define the excursion time at position $0$, $\tau_{0}=0$, and for $n\geq 1$,
 \begin{equation}\label{e}
\begin{split}
\tau_{n}=\inf\{k>\tau_{n-1}:X_{k}=0\}.
\end{split}
\end{equation}
We are interested in the local time $L(j; \tau_{N})$, the visiting number at position $j$ by $\{X_n\}_{n\geq 0}$ in the first $N$ excursions.  Define the scaling $l_{N}(x)$ as follows, for $\forall N\in Z^{+},x\in [0,1]$,
\begin{equation}\label{sl}
l_{N}(x)=
\begin{cases}
\frac{L([Nx];  ~\tau_{N})}{N}, & \mbox{for $Nx\geq1$}, \\
2/\sigma^{2}, & \mbox{for $0\leq Nx <1$},
\end{cases}
\end{equation}
where $\sigma^2=DX_n=p_1+q+4p_2=6q-2$, $n>1$.
Our main results is the following
\begin{theorem}\label{main} As the random elements on $D[0,\infty)$,
\begin{equation}\label{1}
l_{N}(\cdot)\Rightarrow H(\cdot)
\end{equation}
where $H(x),x\in[0,\infty)$ is a diffusion processes which is the solution of the stochastic differential equation
\begin{flalign}\label{2}
&H(x)=\frac{2}{\sigma^{2}}+\frac{2}{\sigma}\int_{0}^{x}(H(s)^{+})^{\frac{1}{2}}dB_{s},
\end{flalign}
where $B(t),t\in[0,\infty)$ is standard Brownian motion.
\end{theorem}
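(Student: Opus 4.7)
The plan is to follow the same high-level strategy Rogers used for $L=1$, with the single-type Galton-Watson process replaced by the multi-type branching structure of Hong-Wang. The program has two phases: first, write the statistic $L(\cdot;\tau_N)$ as a linear functional of a multi-type Galton-Watson chain indexed by height; second, establish a functional scaling limit for this chain by proving convergence of finite-dimensional distributions and tightness separately.

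Step one. Following Hong-Wang, I would encode the passages of $\{X_n\}$ above level $j$ by counting the $L$ different types of up-crossings of level $j$ (indexed by the possible overshoots $+1,\ldots,+L$). This produces a sequence $\{\mathbf{Z}_j\}_{j\ge 0}$ with $\mathbf{Z}_j=(Z^{(1)}_j,\ldots,Z^{(L)}_j)$ that is a time-homogeneous multi-type GW chain in the height variable $j$, started from $\mathbf{Z}_0$ linear in $N$ (one contribution per excursion). The identity for the local time should read $L(j;\tau_N)=\langle c,\mathbf{Z}_j\rangle+O(1)$ for an explicit vector $c$ aligned with the right Perron eigenvector of the offspring mean matrix $M$; the symmetry condition $EX_n=0$ forces the Perron eigenvalue to equal one, putting us in the critical regime.

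Step two. For the FDD convergence I would apply a multi-type version of the Lamperti-Lindvall scaling theorem: in the critical case, scaling population and time by $N$ sends the projection of $\mathbf{Z}_{[Nx]}/N$ onto the right Perron eigenvector to a one-dimensional Feller-type branching diffusion, while projections onto the other eigendirections are of order $\sqrt{N}$ and vanish at scale $N$. The limiting diffusion has zero drift (by criticality) and diffusion coefficient determined by the offspring covariance matrix evaluated against the Perron eigenvectors; the algebra should collapse to $\tfrac{2}{\sigma}(H^+)^{1/2}$, matching (\ref{2}). The initial value $H(0)=2/\sigma^2$ is built into the normalization at $x=0$ in (\ref{sl}). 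Together this yields $l_N(x)=\langle c,\mathbf{Z}_{[Nx]}\rangle/N+o(1)\Rightarrow H(x)$ in the sense of finite-dimensional distributions. For tightness in $D[0,\infty)$, I would establish a fourth-moment bound $E[(l_N(x+h)-l_N(x))^4]\le Ch^2$ uniformly in $N$ from the martingale decomposition of the multi-type GW chain; Billingsley's (or Aldous's) criterion then delivers tightness, and combined with the FDD limit it proves the theorem.

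The main obstacle is the reduction from multi-type to one-dimensional in the second phase. One must control the non-principal eigendirections uniformly in $j\le Nx$ so that they contribute only $o(N)$; this needs a quantitative spectral gap for $M$ together with an averaging argument showing that the type-composition relaxes rapidly onto the principal direction under iteration of $M$. This is the genuinely new difficulty relative to Rogers's single-type treatment, and it dictates both the specific choice of $c$ and the fact that the scalar SDE (\ref{2}) captures the entire scaling limit.
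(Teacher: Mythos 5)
Your outline is correct and follows the same architecture as the paper: the Hong--Wang branching structure gives $L(j;\tau_1)=U_1(j-1)+U_1(j)+U_2(j)$, so after summing over the $N$ i.i.d.\ excursions the problem reduces to a functional limit theorem for the linear functional $2U_{N,1}+U_{N,2}$ of a critical $2$-type Galton--Watson process, proved by finite-dimensional convergence plus tightness. The differences are tactical. For the FDDs the paper does not invoke a general multi-type Lamperti--Lindvall theorem; it notes explicitly that Nakagawa's Theorem~1.1 does not cover this particular linear functional, and instead computes the Laplace transforms in closed form from the generating-function recursion by diagonalizing $M$ (eigenvalues $1$ and $\alpha=(1-2q)/q$ with $|\alpha|<1$) --- this is exactly the quantitative spectral gap you say is needed, carried out by hand, and the multi-dimensional case then follows by induction using the branching property. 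One slip in your write-up: for $L=2$ the coefficient vector is $c=(2,1)$ while the right Perron eigenvector of $M$ is $(1,2)'$, so $c$ is not aligned with it; what saves the argument is that the population composition relaxes onto the \emph{left} Perron eigenvector, so any $c$ not annihilated by it yields the same one-dimensional diffusion up to a constant, which is where the factor $2/\sigma^2$ comes from. For tightness the paper uses conditional second-moment estimates in the product form of Billingsley's Theorem~13.5 (following Nakagawa) rather than a fourth-moment bound; note that your stated estimate $E[(l_N(x+h)-l_N(x))^4]\le Ch^2$ cannot hold uniformly in $N$ when $h<1/N$ but $[N(x+h)]\ne[Nx]$, and the standard repair is precisely the product (sandwich) form of the criterion, in which one of the two factors vanishes at that scale.
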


\noindent {\bf Remark 1.1} {\it Actually $(H({t}))_{t\geq0}$ is a continuous (time and states) branching process with $H(0)=\frac{2}{\sigma^{2}}$ and its
transition probabilities satisfy, for $\lambda\geqslant0$,
\begin{equation}\label{3}
\int e^{-\lambda y}p_t(x,dy)=exp\left(-x\frac{\lambda}{1+\frac{2}{\sigma^2}\lambda t}\right).
\end{equation}
We will prove the Theorem following the usual schedule by proving the convergence of the finite dimensional distribution and the tightness.\qed
}

\noindent {\bf Remark 1.2} {\it  Let $\{B(t): t\geq0\}$ be Brownian motion on $\mathbf{R}, B_{0}=0$, and let $l(x,t)$ be its local time. Define $T=\inf\{t:l(0,t)>1\}$. Ray-Knight Theorem tells us that
\begin{equation*}
(l(x,T))_{x\geq0}=(Z_{t})_{t\geq0},
\end{equation*}
where $Z$ is the solution of the stochastic differential equation
\begin{flalign*}
&Z_{t}=1+\sqrt{2}\int_{0}^{t}(Z_{s}^{+})^{\frac{1}{2}}dB_{s},
\end{flalign*}
and its
transition probabilities satisfy, for $\lambda\geqslant0$,
\begin{displaymath}
\int e^{-\lambda y}p_t(x,dy)=e^{-x\frac{\lambda}{1+\lambda t}}.
\end{displaymath}
From this point of view, (\ref{1}) can be rewritten as
    \begin{equation}\label{0}
l_{N}(\cdot)\Rightarrow \frac{1}{\sigma^{2}}l^{*}(\cdot,T)
\end{equation}
where $l^{*}(x,T)=l(x,T)+l(-x,T), x\geq 0,$ which  coincides with the scaling behavior of the $(1,L)$-random walk $\frac{X_{[nx]}}{\sqrt{n}}\longrightarrow B_{\sigma^2x}$.   \qed
}


\section{ Local time and branching process in the $(1,L)-$ random walk  \label{s2}}

We will  express the local time in terms of the intrinsic branching structure within the $(1,L)-$ random walk revealed by Hong and Wang (\cite{HW}, 2013).  For simplicity of the notation, in what follows, we will restrict ourselves to consider the case $L=2$.

\subsection{Branching process within the $(1,L)-$random walk}
Let us recall the 2-type branching processes within the $(1,2)-$ random walk(\cite{HW}, 2013).
For $i\geq 1$, to record the visiting number at the position $i$ by the walk $\{X_n\}_{n\geq 0}$ in the first excursion at $0$, define
\begin{flalign}
\begin{split}
    U_{1}(i-1)&=\#\{0\leq n<\tau_{1}:X_{n}< i,~X_{n+1}=i\},\label{du1}\\
    U_{2}(i-1)&=\#\{0\leq n<\tau_{1}:X_{n}< i,~X_{n+1}=i+1\}.
\end{split}
\end{flalign}
For $n\geq 0$, let $ U_{n}=(U_{1}(n),U_{2}(n)).$
\begin{theorem}\label{bp}\mbox{(Hong $\&$ Wang, \cite{HW}, 2013)} (1)The process $\{U_n\}_{n=0}^{\infty}$ is a $2$-type critical branching process whose branching mechanism is given by $P(U_{0}=(1,0))=1$,
and for $k\geq 1$
\begin{flalign}
\begin{split}
&P^{(1)}(u_{1},u_{2}):=P(U_{k+1}=(u_{1},u_{2})\big|U_{k}=e_{1})=\frac{(u_{1}+u_{2})!}{u_{1}!u_{2}!}p_{1}^{u_{1}}p_{2}^{u_{2}}q,\label{u1}\\
&P^{(2)}(u_{1},u_{2}):=P(U_{k+1}=(u_{1}+1,u_{2})\big|U_{k}=e_{2})=\frac{(u_{1}+u_{2})!}{u_{1}!u_{2}!}p_{1}^{u_{1}}p_{2}^{u_{2}}q.
\end{split}
\end{flalign}
(2)Let $m_{ij}=E(U_j(n+1)|U_n=e_i) $ be  the mean offspring  of type j particles born  from a single type i parent particle. Define the mean offspring matrix $M=\{m_{ij},i,j=1,2\}$,
then
\begin{flalign}
{M} =\left(\begin{array}{ccccc}
\rho_{1} & \rho_{2} \\
1+\rho_{1} & \rho_{2}
\end{array} \right)\label{m}
\end{flalign}
where $\rho_{i}=\frac{p_{i}}{q},i=1,2$.
\end{theorem}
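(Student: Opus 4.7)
The plan is to identify each particle in $\{U_n\}$ with a concrete excursion segment of the walk and then read off its offspring law from the step distribution together with the strong Markov property. The key structural fact is that the only down-step has size $-1$, so whenever the walk descends from level $\ge i+1$ to level $\le i$ it must pass through $i+1$ exactly; hence each visit to any fixed level is a natural regeneration point.

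First, $P(U_0=(1,0))=1$ is immediate from the reflection at $0$: $P(X_1=1\mid X_0=0)=1$, so the first excursion has exactly one entry to level $1$ and no over-jump. For the inductive step I would attach to each particle in generation $k$ a \emph{life}: for a type-$1$ particle at time $n$ (so $X_n<k+1$, $X_{n+1}=k+1$) the life is the piece of trajectory starting at time $n+1$ and ending at the first subsequent $-1$-step from $k+1$; for a type-$2$ particle (an over-jump from $k$ to $k+2$) the life starts with that $+2$-step and ends the same way. Because the walk cannot skip levels going down, these lives are disjoint and together cover every visit to levels $\ge k+1$ in the first excursion.

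Within one type-$1$ life, let $V$ denote the number of visits to $k+1$. Each such visit is followed by an i.i.d.\ step that ends the life iff it equals $-1$, so $V$ is geometric with parameter $q$, and conditionally on $V=v$ the preceding $v-1$ up-steps are independent with each being a $+1$ with probability $p_1/(1-q)$ and a $+2$ with probability $p_2/(1-q)$. Every $+1$ from $k+1$ is an entry to $k+2$ (one type-$1$ offspring in generation $k+1$) and every $+2$ from $k+1$ is an over-jump of $k+2$ (one type-$2$ offspring). Summing over arrangements of $u_1$ $+1$'s and $u_2$ $+2$'s among the $v-1=u_1+u_2$ up-steps gives $P^{(1)}(u_1,u_2)=\binom{u_1+u_2}{u_1}p_1^{u_1}p_2^{u_2}q$. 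For a type-$2$ parent, the initial $+2$-step from $k$ to $k+2$ is itself an entry to $k+2$ from below, contributing an automatic $+1$ to the first coordinate; after landing the walk must eventually return to $k+1$ and the same geometric-plus-multinomial story applies, yielding $P^{(2)}$ as the shift $u_1\mapsto u_1+1$ of $P^{(1)}$. Independence of offspring across distinct particles in the same generation then follows by applying the strong Markov property at the start of each life, and homogeneity in $k$ holds because the transition law is translation invariant away from $0$. The mean matrix is a direct consequence: $E[V]=1/q$ gives $E[u_1]=((1-q)/q)\cdot(p_1/(1-q))=\rho_1$ and $E[u_2]=\rho_2$, while the extra $+1$ for type-$2$ parents shifts the $m_{21}$ entry from $\rho_1$ to $1+\rho_1$, producing $M$ as in (\ref{m}).

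The hardest part will be making the life decomposition fully rigorous: one has to enumerate the entries and over-jumps at level $k+1$ in time order, define successive stopping times that close off each life, and verify that these lives partition the relevant portion of the trajectory exactly once, with no step counted twice or omitted. Once this bookkeeping is in place, the strong Markov property applied at each such stopping time delivers the i.i.d.\ offspring laws needed for the multi-type branching characterization.
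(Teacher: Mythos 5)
Your argument is correct, but it is worth noting that the paper itself does not prove this theorem: it is imported wholesale from Hong and Wang \cite{HW}, with only Remark 2.1 indicating that one must read the branching structure in the ``upward'' rather than ``downward'' direction because the walk is reflected at $0$. What you have written is a self-contained derivation of that imported result, and it is the right one: since the only negative step has size $-1$, each maximal time interval during which the walk stays in $\{j:j\ge k+1\}$ is opened by exactly one generation-$k$ particle (an arrival at $k+1$ for type $1$, an over-jump to $k+2$ for type $2$) and closed by a $-1$ step from $k+1$; within such a ``life'' the visits to $k+1$ form a geometric($q$) sequence whose non-terminal steps are i.i.d.\ $+1$ or $+2$ with probabilities $p_1/(1-q)$ and $p_2/(1-q)$, which gives exactly $P(u_1,u_2)=\binom{u_1+u_2}{u_1}\bigl(\tfrac{p_1}{1-q}\bigr)^{u_1}\bigl(\tfrac{p_2}{1-q}\bigr)^{u_2}(1-q)^{u_1+u_2}q$ as in (\ref{u1}), and the extra deterministic type-$1$ child for a type-$2$ parent accounts for the $1+\rho_1$ entry of $M$ in (\ref{m}). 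Your means $E[V-1]\cdot p_i/(1-q)=\rho_i$ are also right. The one point you correctly flag as delicate --- that the lives partition the trajectory and that the strong Markov property yields independence across particles \emph{and} independence from the generation-$k$ population size --- is the standard Dwass/Kesten--Kozlov--Spitzer bookkeeping and is precisely what \cite{HW} formalizes; you should also invoke recurrence (guaranteed by the centering condition (\ref{00})) to ensure each life terminates almost surely, so that $V$ is an honest geometric random variable. With that said, your route is essentially the proof the paper delegates to its reference, not a genuinely different one.
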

\noindent{\bf Remark 2.1.} This  is  followed from the result of branching structure in the $(L,1)$ random walk which is revealed by Hong and  Wang(\cite{HW}, 2013).  A little bit attention should be noted is here we view the branching structure from the ``upward" direction whereas
  in Hong and  Wang(\cite{HW}, 2013) from the ``downward" direction, because here we consider the reflected random walk.
   We should  consider here the branching processes $\{U_n\}_{n\geq 0}$ begin at $n\geq 1$ and with $U_0=(1,0)$ as the
   ``immigration". In addition, the ``symmetric" condition (\ref{00}) ensures the maximal eigenvalue of the offspring
   matrix $M$ is 1, i.e., the multitype branching process  $\{U_n\}_{n\geq 0}$ is critical.

\

With Theorem \ref{bp} in hand, we can calculate the probability generating function of $\{U_n\}_{n\geq 0}$ as follows, which is useful in the proof of the scaling limit.

\begin{proposition} (1)Denote $\boldsymbol{s}=(s_{1},s_{2})$, $g^{(i)}(s_{1},s_{2}):=E(\boldsymbol{s}^{U_2}|U_1=e_i)$, $i=1,2;$ then
\begin{equation}
\begin{split}
g^{(1)}(s_{1},s_{2})=\frac{q}{1-p_{1}s_{1}-p_{2}s_{2}}\label{g1}\\
g^{(2)}(s_{1},s_{2})=\frac{qs_{1}}{1-p_{1}s_{1}-p_{2}s_{2}}
\end{split}
\end{equation}
(2)Let $f_n(s_{1},s_{2})$ be the generating function of $\{U_n=(U_{1}(n),U_{2}(n))\}_{n=0}^{\infty}$, i.e., $f_n(s_{1},s_{2}):=E(\boldsymbol{s}^{U_n}|U_0=e_1)$, we have for $n\geq 1$
\begin{equation}\label{fn}
f_{n}(s_{1},s_{2})=\frac{1+(1-s_{1})a_{n-1}+(1-s_{2})b_{n-1}}{1+(1-s_{1})a_{n}+(1-s_{2})b_{n}}
\end{equation}
where $(a_{0},b_{0})=(0,0)$, for $n>0$, $(a_{n},b_{n})=\boldsymbol{u}(M^{n-1}+M^{n-2}+ \cdots +M+I)$  and $\boldsymbol{u} = (\rho_{1},\rho_{2}).$
\end{proposition}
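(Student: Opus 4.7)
For part (1), the plan is a direct calculation. Given $U_1 = e_i$, the conditional distribution of $U_2$ is given by the branching mechanism in (\ref{u1}). So I expand
\[
g^{(1)}(s_1,s_2)=\sum_{u_1,u_2\ge 0}\binom{u_1+u_2}{u_1}p_1^{u_1}p_2^{u_2}q\,s_1^{u_1}s_2^{u_2}
=q\sum_{n\ge 0}(p_1s_1+p_2s_2)^n=\frac{q}{1-p_1s_1-p_2s_2}.
\]
For $g^{(2)}$, the mechanism in (\ref{u1}) shows the offspring from a single type-2 parent equals that from a type-1 parent plus one extra type-1 particle, so $g^{(2)}=s_1\,g^{(1)}$, giving the stated formula.

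For part (2), the plan is induction on $n$ using the standard branching recursion
\[
f_{n+1}(s_1,s_2)=f_n\!\bigl(g^{(1)}(s_1,s_2),\,g^{(2)}(s_1,s_2)\bigr).
\]
The base case $n=1$: since $U_0=e_1$, we have $f_1=g^{(1)}$, and one checks that the candidate formula with $(a_0,b_0)=(0,0)$ and $(a_1,b_1)=\mathbf{u}=(\rho_1,\rho_2)$ reduces to $q/(1-p_1s_1-p_2s_2)$ after using $q+p_1+p_2=1$.

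The inductive step is the heart of the matter, and where the linear-fractional form pays off. First I would establish the key algebraic identity
\[
\begin{pmatrix}1-g^{(1)}(s_1,s_2)\\[2pt]1-g^{(2)}(s_1,s_2)\end{pmatrix}
=g^{(1)}(s_1,s_2)\cdot M\begin{pmatrix}1-s_1\\[2pt]1-s_2\end{pmatrix},
\]
which follows by direct computation from part (1) and the form (\ref{m}) of $M$. Combined with the telescoping identity
\[
(a_{n+1},b_{n+1})=(a_n,b_n)M+\mathbf{u},
\]
this gives, after substitution of $s_i\mapsto g^{(i)}$ into the induction hypothesis,
\[
1+(1-g^{(1)})a_n+(1-g^{(2)})b_n
=g^{(1)}\bigl[1+(1-s_1)a_{n+1}+(1-s_2)b_{n+1}\bigr],
\]
with the analogous shifted identity for the numerator. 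Dividing numerator by denominator, the common factor $g^{(1)}$ cancels and I obtain precisely (\ref{fn}) with $n$ replaced by $n+1$.

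The main obstacle is not conceptual but bookkeeping: tracking how the action of $M$ on $(1-s_1,1-s_2)^T$ interacts with the step $(a_n,b_n)\mapsto(a_n,b_n)M+\mathbf{u}$. Once the identity for $(1-g^{(1)},1-g^{(2)})^T$ is in place, everything reduces to linear algebra and the recursion collapses cleanly, so there should be no nontrivial difficulty remaining.
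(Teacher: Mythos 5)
Your proposal is correct and follows essentially the same route as the paper: part (1) by summing the negative multinomial series (with the observation $g^{(2)}=s_1g^{(1)}$), and part (2) by induction via the composition recursion $f_{n+1}=f_n(g^{(1)},g^{(2)})$, where your identity $(1-g^{(1)},1-g^{(2)})^{T}=g^{(1)}M(1-s_1,1-s_2)^{T}$ is just a cleaner packaging of the expansion the paper carries out to reach $(a_{n-1},b_{n-1})M+\boldsymbol{u}$.
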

\noindent {\it Proof.} (1) By direct calculation from the branching mechanism (\ref{u1}),
\begin{equation*}
\begin{split}
g^{(1)}(s_{1},s_{2})&=\sum^{\infty}_{u_{1},u_{2}=0}P^{(1)}(u_{1},u_{2})s_{1}^{u_{1}}s_{2}^{u_{2}}\\
&=\sum^{\infty}_{u_{1},u_{2}=0}\frac{(u_{1}+u_{2})!}{u_{1}!u_{2}!}p_{1}^{u_{1}}p_{2}^{u_{2}}q s_{1}^{u_{1}}s_{2}^{u_{2}}\\
&=\frac{q}{1-p_{1}s_{1}-p_{2}s_{2}}\\
&=\frac{1}{1+(1-s_{1})\rho_{1}+(1-s_{2})\rho_{2}},\\
\end{split}
\end{equation*}
and similarly to get $g^{(2)}(s_{1},s_{2})$.

\noindent(2) We will show (\ref{fn}) by induction. Firstly, when $n=1$,
\begin{flalign*}
f_{1}(\boldsymbol{s})=g^{(1)}(s_{1},s_{2})& =\frac{1}{1+(1-s_{1})\rho_{1}+(1-s_{2})\rho_{2}}\\
&=\frac{1+(1-s_{1})a_{0}+(1-s_{2})b_{0}}{1+(1-s_{1})a_{1}+(1-s_{2})b_{1}}.
\end{flalign*}
Assume (\ref{fn}) is true when $k\leq n-1$,  we have
\begin{flalign*}
f_{n}(\boldsymbol{s})&=f_{n-1}(g^{(1)}(s_{1},s_{2})),g^{(2)}(s_{1},s_{2})\\
&=\frac{1+(1-\frac{1}{1+(1-s_{1})\rho_{1}+(1-s_{2})\rho_{2}})a_{n-2}+(1-\frac{s_{1}}{1+(1-s_{1})\rho_{1}+(1-s_{2})\rho_{2}})b_{n-2}}{1+(1-\frac{1}{1+(1-s_{1})\rho_{1}+(1-s_{2})\rho_{2}})a_{n-1}+(1-\frac{s_{1}}{1+(1-s_{1})\rho_{1}+(1-s_{2})\rho_{2}})b_{n-1}}\\
&=\frac{1+(1-s_{1})(\rho_{1}+\rho_{1}a_{n-2}+\rho_{1}b_{n-2}+b_{n-2})+(1-s_{2})(\rho_{2}+\rho_{2}a_{n-2}+\rho_{2}b_{n-2})}{1+(1-s_{1})(\rho_{1}+\rho_{1}a_{n-1}+\rho_{1}b_{n-1}+b_{n-1})+(1-s_{2})(\rho_{2}+\rho_{2}a_{n-1}+\rho_{2}b_{n-1})}\\
&=\frac{[(a_{n-2},b_{n-2})M+(\rho_{1},\rho_{2})](1-s_{1},1-s_{2})^{\prime}}{[(a_{n-1},b_{n-1})M+(\rho_{1},\rho_{2})](1-s_{1},1-s_{2})^{\prime}}\\
&=\frac{1+(1-s_{1})a_{n-1}+(1-s_{2})b_{n-1}}{1+(1-s_{1})a_{n}+(1-s_{2})b_{n}}
\end{flalign*}
and $(a_{n},b_{n})=(a_{n-1},b_{n-1})M+(\rho_{1},\rho_{2})$ for $n\geq 1$. \qed

\noindent{\bf Remark 2.2}~  {\it It should be better to write $f^{(1)}_n(s_{1},s_{2}) $ for $f_n(s_{1},s_{2})$ because $f_n(s_{1},s_{2}):=E(\boldsymbol{s}^{U_n}|U_0=\boldsymbol{e_1})$. In next section, we need the other one $f^{(2)}_n(s_{1},s_{2}):=E(\boldsymbol{s}^{U_n}|U_0=\boldsymbol{e_2})$. By the similar calculation, we have $f^{(2)}_{1}(s_{1},s_{2})=g^{(2)}(s_{1},s_{2})$, and for $n\geq 2$,
\begin{equation}\label{fn2}
f^{(2)}_{n}(s_{1},s_{2})=\frac{1+(1-s_{1})a_{n-2}+(1-s_{2})b_{n-2}}{1+(1-s_{1})a_{n}+(1-s_{2})b_{n}}.
\end{equation} \qed

}

\subsection{Local time $L(j; \tau_{N})$}

From the definition of $ U_{n}=(U_{1}(n),U_{2}(n))$ in (\ref{du1}), we can easily express the local time $L(j; \tau_{N})$ in terms of the $2$-type branching processes $\{U_n\}_{n\geq 0}$ as follows,

\begin{theorem}\label{lc} (1) For $j\geq 1$,
\begin{equation}\label{ex1}
 L(j; \tau_{1}) = U_{1}(j-1)+U_{1}(j)+U_{2}(j).
\end{equation}
(2) For any positive integral $N$,
\begin{equation}\label{exn}
 L(j; \tau_{N}) = \sum_{r=1}^{N} \xi_r,
\end{equation}
where $\xi_r$, $r=1,2,\cdots$ are i.i.d. random variables, distributed as $L(j; \tau_{1})$.
\end{theorem}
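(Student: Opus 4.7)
The plan for part~(1) is to decompose each visit to $j$ before $\tau_1$ according to how the walker arrives, and then to apply a flux/balance argument across a single bond. Fix $j\geq 1$; since $X_0=0\neq j$, every contribution to $L(j;\tau_1)$ corresponds to some $n<\tau_1$ with $X_{n+1}=j$, and I would split these ``arrivals at $j$'' into arrivals from below ($X_n<j$) and arrivals from above ($X_n>j$). By definition~(\ref{du1}), arrivals from below are precisely counted by $U_1(j-1)$. Because the only downward step of the $(1,2)$-walk has size one, arrivals from above are in bijection with down-transitions from $j+1$ to $j$; to count those I would use that the walker begins and ends the excursion at $0<j+1$, so the number of down-transitions from $j+1$ to $j$ equals the number of upward crossings of the bond $j\to j+1$, i.e., the cardinality of $\{n<\tau_1:X_n\leq j,\;X_{n+1}\geq j+1\}$. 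Each such crossing lands either at $j+1$ (counted by $U_1(j)$) or at $j+2$ (counted by $U_2(j)$), so the arrivals from above total $U_1(j)+U_2(j)$, which combined with the previous step yields the claimed identity.

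For part~(2), I would set $\xi_r:=L(j;\tau_r)-L(j;\tau_{r-1})$, the number of visits to $j$ during the $r$-th excursion $[\tau_{r-1},\tau_r)$. Since $\tau_{r-1}$ is a stopping time with $X_{\tau_{r-1}}=0$, the strong Markov property shows that the post-$\tau_{r-1}$ trajectory run until its first return to $0$ is an independent copy of the original walk up to $\tau_1$; hence the $\xi_r$ are i.i.d.\ with common law $L(j;\tau_1)$, and telescoping gives~(\ref{exn}).

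The main (rather mild) obstacle is stating the flux argument cleanly, so that the boundary behavior at $n=0$ and $n=\tau_1-1$ is correctly handled --- this is precisely where the conditions $X_0=X_{\tau_1}=0<j+1$ and the $L=2$ constraint forbidding downward jumps of size $\geq 2$ enter. Beyond that, both assertions amount to careful but routine bookkeeping.
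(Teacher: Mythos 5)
Your proposal is correct and follows essentially the same route as the paper: part (1) is the same decomposition of visits to $j$ into arrivals from below (giving $U_1(j-1)$) and arrivals from above, the latter identified via the up/down crossing balance of the bond $(j,j+1)$ with $U_1(j)+U_2(j)$; part (2) is the same excursion decomposition via the strong Markov property at the return times $\tau_{r-1}$. Your write-up merely makes the crossing argument and the boundary checks more explicit than the paper does.
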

\noindent{\it Proof} (1)  The local time $L(j; \tau_{1})$ is the visiting number at position $j$ by the  trajectory of the $(1,2)-$random walk within the first excursion at $0$ (i.e., between $0\leq n\leq \tau$),  it  is the summation of two kind steps: ``upper  steps"  (visits at position $j$ from below $j$) and ``down  steps"  (visits at position $j$ from above $j$). By the definition (\ref{du1}), the ``upper  steps" is just  $U_{1}(j-1)$; with regard the recurrence of the $(1,2)$-walk and the walk downs step by step, the  ``down  steps" to $j$ equals to the  steps from (and below) $j$ to $j+1$ (which is $U_{1}(j)$) plus the steps from $j$ to $j+2$ (which is $U_{2}(j)$); and so (\ref{ex1}) is followed.

\noindent (2) Decompose the trajectory of the $(1,2)$-random walk, $L(j; \tau_{N})$ is  the summation of the visiting numbers at $j$ in $N$ excursions, which are the i.i.d. random variables. For more details, write $L(j; m,n):=\{m\leq r\leq n:X_{r}=j\}$, we have
\begin{flalign}\label{ltn}
L(j; \tau_{N}) = \sum_{r=1}^{N} L(j; \tau_{r-1}, \tau_{r}).
\end{flalign}
Write $\xi_r:=L(j; \tau_{r-1}, \tau_{r})$, then $\xi_r$, $r=1,2,\cdots$ are i.i.d. random variables, distributed as $L(j; \tau_{1})$  by the Markov property of the $(1,2)$-random walk.
\qed

\section{Proof of  Theorem \ref{main}  \label{s2}}
With the explicit expression of the local time (\ref{exn}) in terms of the muti-type branching process $\{U_n\}_{n\geq 0}$,  we are now at the position to prove the main result.
Firstly, note that as in the (1) of Theorem \ref{lc}, we have
\begin{flalign}\label{xi}
\xi_r:=L(j; \tau_{r-1}, \tau_{r}) = U^{(r)}_{1}(j-1)+U^{(r)}_{1}(j)+U^{(r)}_{2}(j),
\end{flalign}
where $ \{ U^{(r)}_{n}=(U^{(r)}_{1}(n),U^{(r)}_{2}(n)); n\geq 0\}$, $r=1,2,\cdots$ are i.i.d., distributed as $\{ (U_{1}(n), U_{2}(n) );   n\geq 0\}$. Actually, for each $r\geq 1$, $ \{ U^{(r)}_{n}; n\geq 0\}$ is a $2$-type branching processes corresponding the $r^{th}$ excursion at $0$ of the random walk, which is independent of each other  and with the same branching mechanism with $ \{ U_{n}=(U_{1}(n), U_{2}(n) ); n\geq 0\}$. Recall (\ref{sl}) the scaling of the local time $L(j; \tau_{N})$, by (\ref{ltn}) and (\ref{xi}),
\begin{equation}\label{sl1}
\begin{split}
l_{N}(x)&=\frac{L([Nx];\tau_{N})}{N} =\sum^{N}_{r=1}{\frac{L([Nx]; \tau_{r-1}, \tau_{r})}{N}} \\
&=\sum^{N}_{r=1}{\frac{U^{(r)}_{1}([Nx]-1)+U^{(r)}_{1}([Nx])+U^{(r)}_{2}([Nx])}{N}} \\
&:=U_{N,1}(x-\frac{1}{N})+U_{N,1}(x)+U_{N,2}(x),
\end{split}
\end{equation}
where we write,  for $i=1 ,2$,
\begin{equation}\label{sl1}
U_{N,i}(x)=\sum^{N}_{r=1}{\frac{U^{(r)}_{i}([Nx])}{N}}.
\end{equation}

{So in what follows, we just need to consider the weak convergence of $\{2U_{N,1}(x)+U_{N,2}(x); x\geq 0\}$, with regard of the strong convergence  to $0$ of
$U_{N,1}(x-\frac{1}{N})-U_{N,1}(x)$ by the strong law of large numbers.  Nakagawa (\cite{N86}, 1986) considered the convergence of critical multitype Galton-Watson branching processes, which generalized the results of Lindvall (\cite{Lin72}, 1972) to the multitype case. However, here we can not apply the result directly, because our target $\{2U_{N,1}(x)+U_{N,2}(x); x\geq 0\}$ is different from the $\{\widehat{Y}_n(t); t\geq 0\}$ in Theorem 1.1 of (\cite{N86}, 1986). We can calculate explicitly  based on the branching structure Theorem  \ref{bp}  to specify the role of the $\sigma^2$ in (\ref{2}) and (\ref{3}) comparing with Theorem 1.1 of Nakagawa (\cite{N86}, 1986).

\

\noindent{\it Step 1 Warm up: the convergence of one dimensional distribution}
\begin{lemma}\label{3.1}
For $ x\in[0,1]$, as $N\to\infty$
\begin{equation*}
2U_{N,1}(x)+U_{N,2}(x) \Rightarrow  H(x),
\end{equation*}
where the laplace transform of $H(x)$ is given by
\begin{flalign*}
\Phi(x,\lambda)=exp\left(-\frac{\frac{2}{\sigma^2}\lambda}{1+\frac{2}{\sigma^2}x\lambda}\right),
\end{flalign*}
and $\sigma^2=DX_n=p_1+q+4p_2=6q-2$, $n>1$.
\end{lemma}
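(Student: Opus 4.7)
\noindent\emph{Proof plan for Lemma 3.1.}

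My plan is to compute the Laplace transform of $2U_{N,1}(x)+U_{N,2}(x)$ explicitly, using the independence of the $N$ excursion-branching processes $\{U^{(r)}_n\}_{r=1}^N$ together with the closed form for $f_n(s_1,s_2)$ from Proposition~2.1, and then pass to the limit via a Taylor expansion in $1/N$. Since the $\{U^{(r)}_n\}$ are i.i.d.\ copies started at $U_0=e_1$, the i.i.d.\ structure immediately gives
\[
E\exp\bigl(-\lambda(2U_{N,1}(x)+U_{N,2}(x))\bigr)
=\Bigl[f_{[Nx]}\bigl(e^{-2\lambda/N},\,e^{-\lambda/N}\bigr)\Bigr]^{N}.
\]
Plugging $s_1=e^{-2\lambda/N}$ and $s_2=e^{-\lambda/N}$ into (\ref{fn}) and using $1-e^{-c\lambda/N}=c\lambda/N+O(N^{-2})$, the whole limiting problem reduces to determining the rate of linear growth of $2a_n+b_n$.

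The central ingredient is the spectral analysis of $M$. I would first check from (\ref{m}) that the characteristic polynomial factors as $(\lambda-1)(\lambda+\rho_2)$ using the symmetry condition $\rho_1+2\rho_2=1$, so $M$ has eigenvalues $1$ and $-\rho_2$ with $|-\rho_2|<1$. A direct calculation shows that $(1,2)^T$ is the right Perron eigenvector and $(1-\rho_2,\rho_2)$ (after normalization) is the left one. Since $(a_n,b_n)=\boldsymbol{u}(I+M+\cdots+M^{n-1})$ and $M^{k}$ converges geometrically to the rank-one projection onto the Perron direction, one obtains
\[
\frac{(a_n,b_n)}{n}\longrightarrow\boldsymbol{u}P=\frac{1}{1+\rho_2}(1-\rho_2,\,\rho_2).
\]
Pairing this with the coefficient vector $(2,1)$ and translating to the parameter $q$ (using $\rho_2=(2q-1)/q$) yields precisely $(2a_n+b_n)/n\to 1/(3q-1)=2/\sigma^2$. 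I expect this eigenvector identification, together with the algebraic simplification that produces the exact factor $2/\sigma^2$, to be the one place where care is required; everything else is bookkeeping.

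With these asymptotics in hand, the denominator of $f_{[Nx]}(e^{-2\lambda/N},e^{-\lambda/N})$ is
\[
1+(1-e^{-2\lambda/N})a_{[Nx]}+(1-e^{-\lambda/N})b_{[Nx]}
=1+\frac{\lambda}{N}(2a_{[Nx]}+b_{[Nx]})+O(N^{-1})
\longrightarrow 1+\tfrac{2}{\sigma^2}x\lambda,
\]
and the difference between numerator and denominator is
$(1-e^{-2\lambda/N})(a_{[Nx]}-a_{[Nx]-1})+(1-e^{-\lambda/N})(b_{[Nx]}-b_{[Nx]-1})$,
which by the same spectral analysis is asymptotic to $\frac{\lambda}{N}\cdot\frac{2}{\sigma^2}$. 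Hence
\[
f_{[Nx]}\bigl(e^{-2\lambda/N},e^{-\lambda/N}\bigr)
=1-\frac{1}{N}\cdot\frac{(2/\sigma^2)\lambda}{1+(2/\sigma^2)x\lambda}+o(N^{-1}),
\]
and raising to the $N$-th power yields the claimed Laplace transform $\Phi(x,\lambda)$. The convergence of Laplace transforms on $[0,\infty)$ then gives the weak convergence $2U_{N,1}(x)+U_{N,2}(x)\Rightarrow H(x)$, completing the one-dimensional step before the finite-dimensional and tightness arguments take over in the next sections.
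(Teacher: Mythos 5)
Your proposal is correct and follows essentially the same route as the paper: both express the Laplace transform as $[f_{[Nx]}(e^{-2\lambda/N},e^{-\lambda/N})]^N$ via the i.i.d.\ excursion structure and Proposition 2.1, and both extract the asymptotics of $(a_n,b_n)$ from the spectral decomposition of $M$ (the paper diagonalizes explicitly with the matrix $T$, you use the equivalent Perron projection onto the eigenvector pair $(1,2)^T$ and $(1-\rho_2,\rho_2)$, arriving at the same limits $A_N,B_N\to \frac{2}{\sigma^2}x\lambda$ and $N(B_N-A_N)\to\frac{2}{\sigma^2}\lambda$). Your eigenvalue identification $-\rho_2=(1-2q)/q$ and the final constant $1/(3q-1)=2/\sigma^2$ check out against the paper's computation.
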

\noindent{\it Proof}~  Recall the notation (\ref{sl1}), we calculate the
  Laplace transformation of  $2U_{N,1}(x)+U_{N,2}(x)$,
\begin{flalign}\label{lf}
F_{2U_{N,1}(x)+U_{N,2}(x)}(\lambda)&=E exp \left[-\lambda(2U_{N,1}(x)+U_{N,2}(x)|U_0=\boldsymbol{e}_1\right]\nonumber\\
&=\left\{E exp \left[-\frac{\lambda}{N}\left[(2U_{1}([Nx])+U_{2}([Nx])\right]\right]\right\}^N\nonumber\\
&=\left[f_{[Nx]}(e^{-\frac{2\lambda}{N}}, e^{-\frac{\lambda}{N}})\right]^N\nonumber\\
&=\left[\frac{1+(a_{[Nx]-1},b_{[Nx]-1})\boldsymbol{v}'}{1+(a_{[Nx]},b_{[Nx]})\boldsymbol{v}'}\right]^N.
\end{flalign}
the last step is from (\ref{fn}), where $\boldsymbol{v}=(1-e^{-\frac{2\lambda}{N}},1-e^{-\frac{\lambda}{N}})$, $(a_{n},b_{n})=\boldsymbol{u}(M^{n-1}+M^{n-2}+ \cdots +M+I)$  and $\boldsymbol{u} = (\rho_{1},\rho_{2}).$
If we denote
\begin{equation}\label{AB}
 \begin{cases}
A_N(x): = (a_{[Nx]-1},b_{[Nx]-1})\boldsymbol{v} ', &  \\
B_N(x): = (a_{[Nx] },b_{[Nx] })\boldsymbol{v} '.
\end{cases}
\end{equation}
(\ref{lf}) can be written as
\begin{flalign}\label{lf1}
F_{2U_{N,1}(x)+U_{N,2}(x)}(\lambda)&=\left(\frac{1+A_N(x)}{1+B_N(x)}\right)^N\nonumber\\
&=\left[\left(1+\frac{A_N(x)-B_N(x)}{1+B_N(x)}\right)^{(1+B_N(x))/(A_N(x)-B_N(x))}\right]^{N(A_N(x)-B_N(x))/(1+B_N(x))}.
\end{flalign}
We are now to consider the limit of $A_N(x), B_N(x)$ and $N(A_N(x)-B_N(x))$. To this end, recall
$M$ is the mean offspring matrix , see (\ref{m}).  Recall that in our ``symmetric"  model, we have $p_1+p_2+q=1$, and $EX_n=p_1+2p_2-q=0$, $n\geq 1$. By calculation, two of the eigenvalues of $M$ are $1$ and $\alpha=\frac{1-2q}{q}$ (and $|\alpha|<1$); and there is a  matrix $T$
\begin{flalign}
 T=\left(\begin{array}{ccccc}
1 & 1-2q \\
2 & 1-q
\end{array} \right),~~~  &T^{-1}=\frac{1}{3q-1}\left(\begin{array}{ccccc}
1-q &  2q-1 \\
-2 & 1
\end{array} \right) \label{dT}
\end{flalign}
such that
\begin{flalign}
{M} =T\left(\begin{array}{ccccc}
1 & 0 \\
0 & \alpha
\end{array} \right)T^{-1}. \label{Mn}
\end{flalign}
Then
\begin{eqnarray}\label{An}
& &A_N(x): = (a_{[Nx]-1},b_{[Nx]-1})\boldsymbol{v} ' =\boldsymbol{u}(M^{[Nx]-2}+M^{[Nx]-3}+ \cdots +M+I)\boldsymbol{v}'\nonumber\\
& &=\boldsymbol{u}T \left(\begin{array}{ccccc}
[Nx]-1& 0 \\
0 & \alpha^{[Nx]-2}+\cdots +\alpha +1
\end{array} \right) T^{-1}\boldsymbol{v}'\nonumber\\
& &= \frac{1}{3q-1}\left(1, ~\frac{(2q-1)^2}{q}\right) \left(\begin{array}{ccccc}
[Nx]-1& 0 \\
0 & \alpha^{[Nx]-2}+\cdots +\alpha +1
\end{array} \right) \left(\begin{array}{ccccc}
((q-1)e^{-\frac{\lambda}{N}}-q)( e^{-\frac{\lambda}{N}}-1) \\
(2e^{-\frac{\lambda}{N}}+1)(e^{-\frac{\lambda}{N}}-1)
\end{array} \right) \nonumber\\
& &\longrightarrow  \lambda x \frac{2}{\sigma^2},
\end{eqnarray}
as $N\to\infty$. Similarly, we get
\begin{eqnarray}\label{Bn}
B_N(x): = (a_{[Nx] },b_{[Nx] })\boldsymbol{v} ' =\boldsymbol{u}(M^{[Nx]-1}+M^{[Nx]-2}+ \cdots +M+I)\boldsymbol{v}' \longrightarrow   \lambda x \frac{2}{\sigma^2}.
\end{eqnarray}
Obviously, $B_N(x)-A_N(x)= \boldsymbol{u}M^{[Nx]-1}\boldsymbol{v}'$, and
\begin{eqnarray}\label{ABN}
N(B_N(x)-A_N(x))=N( \boldsymbol{u}M^{[Nx]-1}\boldsymbol{v}' )\longrightarrow   \lambda  \frac{2}{\sigma^2}.
\end{eqnarray}
Combining (\ref{An})-(\ref{ABN}) with (\ref{lf1}), we get
\begin{flalign*}
\lim_{N\rightarrow \infty}F_{2U_{N,1}(x)+U_{N,2}(x)}(\lambda)=exp\left(-\frac{\frac{2}{\sigma^2}\lambda}{1+\frac{2}{\sigma^2}x\lambda}\right)
\end{flalign*}
complete the proof. \qed

\noindent{\bf Remark 3.1}~  {\it We write $F^{(1)}_{N}(x; \lambda):=F_{2U_{N,1}(x)+U_{N,2}(x)}(\lambda)=E exp \left[-\lambda(2U_{N,1}(x)+U_{N,2}(x)|U_0=\boldsymbol{e}_1\right]$. In next step, we need the other one $F^{(2)}_{N}(x; \lambda):=E exp \left[-\lambda(2U_{N,1}(x)+U_{N,2}(x)|U_0=\boldsymbol{e}_2\right]$. By the similar calculation  (with regard of (\ref{fn2})), we can get
\begin{flalign}\label{F12}
\lim_{N\rightarrow \infty}F^{(2)}_{N}(x; \lambda)=\lim_{N\rightarrow \infty}F^{(1)}_{N}(x; \lambda)=\Phi(x,\lambda).
\end{flalign}}
\qed

\noindent{\it Step 2 ~ The convergence of finite dimensional distributions}

\begin{lemma}\label{3.2} For $1 \leq i \leq k,  \lambda_{i} \geq 0$;  and $  0 \leq x_{1} \leq x_{2} \leq \cdots \leq x_{k}\leq1$, we have, as $N\rightarrow \infty$
\begin{flalign}\label{32}
&E\left(exp\left\{-\sum^{k}_{i=1}\lambda_{i}(2U_{N,1}(x_{i})+U_{N,2}(x_{i}))\right\}\right)\rightarrow E\left(exp\left\{-\sum^{k}_{i=1}\lambda_{i}H(x_{i})\right\}\right),
\end{flalign}
where $\{H(x),x\in[0,1]$\}   is a diffusion, continuous (time and states) branching process with $H(0)=\frac{2}{\sigma^{2}}$ given in (\ref{2}).
\end{lemma}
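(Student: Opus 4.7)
My plan is to prove Lemma 3.2 by induction on $k$, using the multitype branching (Markov) property of the single chain at the first time point $[Nx_1]$ to reduce a $k$-point joint Laplace transform to a nested $(k-1)$-point one, and then repeating the explicit spectral computation from the proof of Lemma 3.1 and Remark 3.1 to pass to the limit.

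Since the $N$ chains $\{U^{(r)}\}_{r=1}^N$ are i.i.d., the left side of (3.32) factors as $\{\psi_N(\boldsymbol{\lambda};\boldsymbol{x})\}^N$, where $\psi_N$ is the single-chain joint Laplace transform starting from $\boldsymbol{e}_1$. Conditioning the single chain on $U([Nx_1])=(u_1,u_2)$ and applying the multitype branching property gives the clean recursion
\[
\psi_N(\boldsymbol{\lambda};\boldsymbol{x})=f^{(1)}_{[Nx_1]}\!\bigl(e^{-2\lambda_1/N}g^{(1)}_{k-1}(N),\ e^{-\lambda_1/N}g^{(2)}_{k-1}(N)\bigr),
\]
where $g^{(j)}_{k-1}(N)$ is the $(k-1)$-point single-chain joint Laplace transform starting from $\boldsymbol{e}_j$ at the shifted times $(x_i-x_1)_{i\ge2}$. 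I would run the induction in strengthened form that tracks both starting types: the claim is that $N(1-g^{(1)}_k(N))\to (2/\sigma^2)V_k$ and $N(1-g^{(2)}_k(N))\to (4/\sigma^2)V_k$, where $V_k(\boldsymbol{\lambda};\boldsymbol{x})$ is the Laplace exponent obtained by iterating the CSBP Markov semigroup of $H$ through the $k$ time points. The base case $k=1$ is exactly Lemma 3.1 together with its $\boldsymbol{e}_2$-starting variant (3.19) in Remark 3.1.

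For the inductive step, substituting the hypothesis into the recursion gives $N(1-e^{-2\lambda_1/N}g^{(1)}_{k-1}(N))\to 2\lambda_1+(2/\sigma^2)V_{k-1}$ and $N(1-e^{-\lambda_1/N}g^{(2)}_{k-1}(N))\to \lambda_1+(4/\sigma^2)V_{k-1}$; plugging into the explicit formula (2.6) and repeating the spectral analysis with the decomposition (3.11)--(3.12) of $M$, the dominant projections produce $2\tilde\pi_1+\tilde\pi_2$ and $\tilde\pi_1+2\tilde\pi_2$ for the normalized left $1$-eigenvector $\tilde\pi=(p_1+p_2,p_2)/(p_1+3p_2)$. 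The two algebraic identities $2\tilde\pi_1+\tilde\pi_2=2/\sigma^2$ and $\tilde\pi_1+2\tilde\pi_2=1$, both equivalent to the symmetric criticality condition $p_1+2p_2=q$ in (1.2), collapse the expression to $(2/\sigma^2)(\lambda_1+V_{k-1})/(1+(2/\sigma^2)x_1(\lambda_1+V_{k-1}))=(2/\sigma^2)V_k$, closing the induction. Raising $\psi_N$ to the $N$-th power gives $\{\psi_N\}^N\to\exp\{-(2/\sigma^2)V_k\}=\Phi(x_1,\lambda_1+V_{k-1})$, which by the CSBP Markov property of Remark 1.1 is exactly $E\exp\{-\sum_{i=1}^k\lambda_i H(x_i)\}$.

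The main obstacle is identifying and sustaining this strengthened induction hypothesis. The discrete Markov step inevitably produces the linear combination $U_1+2U_2$ at the intermediate time while the target observable is $2U_1+U_2$, and only the criticality condition from (1.2) balances these via the two identities above into matching constants needed to close the CSBP recursion. Careful control of the $o(1/N)$ remainders in the nested expansions of $g^{(j)}_{k-1}(N)$, as well as adapting the spectral computation of Lemma 3.1 to the slightly more general $\boldsymbol{v}$-vectors arising in the induction, is the remaining technical work.
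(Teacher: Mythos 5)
Your proposal is correct and arrives at the right limit, but it executes the induction in the opposite direction from the paper and carries a strictly sharper induction hypothesis, which is worth comparing. The paper conditions at the \emph{last} time point $x_m$, writes the conditional Laplace transform of the final increment as $[F_N^{(1)}]^{2U_{N,1}(x_m)}[F_N^{(2)}]^{U_{N,2}(x_m)}$, invokes (\ref{F12}) to replace both bases by $\Phi$, and closes the induction through the semigroup identity (\ref{33}); you condition at the \emph{first} time point $x_1$, feed the nested $(k-1)$-point transforms $e^{-2\lambda_1/N}g^{(1)}_{k-1}$ and $e^{-\lambda_1/N}g^{(2)}_{k-1}$ back into the explicit formula (\ref{fn}), and redo the spectral computation of Lemma \ref{3.1} with these more general arguments. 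Your route is computationally heavier, but it is the one that is honest about the type asymmetry: since $g^{(2)}=s_1g^{(1)}$ forces $f^{(2)}_n=f^{(1)}_{n-1}f^{(1)}_n$ (this is exactly (\ref{fn2})), one has $\lim_N [f^{(2)}_n]^N=\Phi^2$, i.e.\ the $\boldsymbol{e}_2$-started exponent is twice the $\boldsymbol{e}_1$-started one --- precisely your $(4/\sigma^2)V_k$ versus $(2/\sigma^2)V_k$ --- while the true particle-count exponents in the backward decomposition are $NU_{N,1}$ and $NU_{N,2}$, so the combination that actually appears at the intermediate time is $U_{N,1}+2U_{N,2}\to\tfrac{\sigma^2}{2}H$ rather than $2U_{N,1}+U_{N,2}\to H$. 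In the paper these two factors are traded against each other (and against the normalization implicit in $\Phi^{H(x_{k-1})}$ in (\ref{33})), so the displayed intermediate identities compensate to give the correct final answer without the bookkeeping ever being made explicit; your two identities $2\tilde\pi_1+\tilde\pi_2=2/\sigma^2$ and $\tilde\pi_1+2\tilde\pi_2=1$, both consequences of $p_1+2p_2=q$ and $p_1+p_2+q=1$, are exactly that bookkeeping, and they make the cancellation transparent. The residual technical points you flag (uniform control of the $o(1/N)$ remainders in the nested expansions, and the harmless discrepancy between $[Nx_i]-[Nx_1]$ and $[N(x_i-x_1)]$) are routine and do not threaten the argument.
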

\begin{proof} First of all, note that $\{H(x),x\in[0,1]$\}   is a  continuous (time and states) branching process whose Laplace transforms satisfy
\begin{equation}\label{33}
E\left(exp\left\{-\sum^{k}_{i=1}\lambda_{i}H(x_{i})\right\}\right)=E\left(exp\left\{-\sum^{k-1}_{i=1}\lambda_{i}H(x_{i})\right\}\Phi^{H(x_{k-1})}
(x_{k}-x_{k-1},\lambda_{k})\right).
\end{equation}
 We will prove (\ref{32}) by induction. Firstly, Lemma (\ref{3.1}) convince (\ref{32}) for $k=1$.
 Assume (\ref{32}) is right when $k\leq m$, we will check (\ref{32}) for   $k=m+1$.
\begin{flalign*}
&E(exp\{-\sum^{m+1}_{i=1}\lambda_{i}(2U_{N,1}(x_{i})+U_{N,2}(x_{i}))\})\\
&=E(E(exp\{-\sum^{m+1}_{i=1}\lambda_{i}(2U_{N,1}(x_{i})+U_{N,2}(x_{i}))\}|U_{N}(x_{m})))\\
&=E\left(exp\{-\sum^{m}_{i=1}\lambda_{i}(2U_{N,1}(x_{i})+U_{N,2}(x_{i}))\}
E(exp\{-\lambda_{m+1}(2U_{N,1}(x_{m+1})+U_{N,2}(x_{m+1}))\}|U_{N}(x_{m}))\right),
\end{flalign*}
in which with the notation $F^{(1)}_{N}(x; \lambda)$ and $F^{(2)}_{N}(x; \lambda)$ in Remark 3.1,
 \begin{flalign*}
&E(exp\{-\lambda_{m+1}(2U_{N,1}(x_{m+1})+U_{N,2}(x_{m+1}))\}|U_{N}(x_{m})))\\
&=[F_{N}^{(1)}(x_{m+1}-x_{m},\lambda_{m+1})]^{2U_{N,1}(x_{m})}
[F_{N}^{(2)}(x_{m+1}-x_{m},\lambda_{m+1})]^{U_{N,2}(x_{m})}.
\end{flalign*}
then, (\ref{F12}) and the induction  for $k\leq m$ enable us to conclude that
\begin{flalign*}
&E(exp\{-\sum^{m+1}_{i=1}\lambda_{i}(2U_{N,1}(x_{i})+U_{N,2}(x_{i}))\})\\
&\longrightarrow E[exp(-\sum_{i=1}^{m}\lambda_{i}H(i))\Phi^{H(x_{m})}(x_{m+1}-x_{m},\lambda_{m+1})],
\end{flalign*}
which is (\ref{32}) for $k=m+1$ with regard of (\ref{33}).
\end{proof}

\noindent{\it Step 3 ~ completing the proof of Theorem \ref{main}}

\

We follow the standard schedule. The  tool for proving the weak convergence of  $\{2U_{N,1}(x)+U_{N,2}(x)\}~ to ~\{H(x)\}$ is Theorem 13.5 in Billingsley (\cite{B99}, 1999) which states that if
$\{V_{n}\}_{n\geq1}$, $V$ are random elements in $D[0,1]$ and
\begin{adjustwidth}{1em}{0em}
(a) the finite dimensional distributions of $V_{n}$ converge to those of V;\\
(b) the probability for jumps of $V$ in the point 1 is zero;\\
(c) there exist $\gamma\geq0,~\alpha>1$ and $F$ is continuous and non-decreasing on $[0,1]$, such that, $E[|V_{n}(t_{2})-V_{n}(t)|^{\gamma}\cdot|V_{n}(t)-V_{n}(t_{1})|^{\gamma}]\leq|F(t_{2})-F(t_{1})|^{\alpha}$, holds for all $n$ and all $0\leq t_{1}\leq t\leq t_{2}\leq1$;
\end{adjustwidth}
then $V_{n}~\Rightarrow ~V$ on $D[0,1]$.\\
Part (a) is already established for $\{2U_{N,1}+U_{N,2}\}_{N\geq1}$ in step 2. Since the processes $H$ has continuous paths, (b) is also no problem. We just need to manage (c). To this end, we can follow  Nakagawa (\cite{N86}, 1986) almost line by line with some modifications as following.

 From (\ref{bp}) we know $\{U_{n}\}$ is critical branching process satisfying the condition in (\cite{N86}, 1986), so the second moments of $U_{n}$ has similar asymptotic behavior, i.e.
\begin{equation*}
d_{ij}(n)=E[U_{i}(n)U_{j}(n)]=(U_{0}\cdot \boldsymbol{\mu})Q_{2}[\boldsymbol{\mu}]\nu_{i}\nu_{j}n+o(n)~~ as ~~n\rightarrow \infty, (i,j=1,2)
\end{equation*}
where\begin{equation*}
Q_{2}[\boldsymbol{\mu}]=\boldsymbol{\nu}\cdot \boldsymbol{q_{2}[\boldsymbol{\mu}]}~and~(\boldsymbol{q_{2}[\mu]})_{i}=\sum_{j=1}^{2}\sum_{k=1}^{2}\mu_{j}b_{jk}^{(i)}\mu_{k},~~~~i=1,2,
\end{equation*}
with $\boldsymbol{\mu}=(\mu_{1},\mu_{2})^{\prime},\boldsymbol{\nu}=(\nu_{1},\nu_{2})$ is the right and left eigenvectors corresponding to the eigenvalue 1 of M in (\ref{m}), and $\boldsymbol{\nu}\cdot\boldsymbol{\mu}=1, ~\boldsymbol{1}\cdot \boldsymbol{\mu}=1$. $b^{(i)}_{jk}=E\{U_{j}(1)U_{k}(1)|U(0)=\boldsymbol{e_{i}}\}-E\{U_{j}(1)|U_{0}=
\boldsymbol{e_{i}}\}E\{U_{k}(1)|U_{0}=\boldsymbol{e_{i}}\}$. For our model, as (3.3) in Nakagawa(\cite{N86}, 1986), one has
\begin{equation}
E[(2U_{1}(n)+U_{2}(2))^{2}]=K_{1}(U_{0}\cdot \boldsymbol{\mu})Q_{2}[\boldsymbol{\mu}]n+o(n) ~~ as ~~n\rightarrow\infty
\end{equation}
with $K_{1}=(2\nu_{1}+\nu_{2})^2$; and as a consequence, corresponding (3.5) in Nakagawa(\cite{N86}, 1986) we get,
for arbitrary integers $m>n\geq0$
\begin{flalign}\label{se}
&E[(U_{m}\cdot \boldsymbol{a}-U_{n}\cdot \boldsymbol{a})^{2}|U_{n}]\nonumber\\
&=E[(U_{m}\cdot\boldsymbol{a})^{2}|U_{n}]+(U_{n}\cdot\boldsymbol{a})^{2}-2(U_{n}\cdot\boldsymbol{a})(E[U_{m}\cdot\boldsymbol{a}|U_{n}])\nonumber\\
&=K_{1}(U_{n}\cdot\boldsymbol{\mu})Q_{2}[\boldsymbol{\mu}](m-n)+o(m-n)+(U_{n}\cdot\boldsymbol{a})^{2}-2(U_{n}\cdot\boldsymbol{a})(U_{n}\cdot M^{m-n}\cdot\boldsymbol{a})\nonumber\\
&\leq K_{1}(U_{n}\cdot\boldsymbol{\mu})Q_{2}[\boldsymbol{\mu}](m-n),
\end{flalign}
the last inequality holds because by calculation  $2M^{m-n}\cdot\boldsymbol{a}>\boldsymbol{a}$. This is enough to get (c) as Nakagawa(\cite{N86}, 1986), and the proof is finished for the convergence in the $D[0,1]$. It is easy to extend the convergence in $D[0,N]$ for any positive integer $N$ by the scaling property of the local time of Brownian motion, and then in $D[0,\infty).$   \qed

\noindent{\large{\bf Acknowledgements}} The authors would like to
thank Dr. Hongyan Sun and Ke Zhou for their stimulating discussions.  This project is partially supported by the
National Nature Science Foundation of China (Grant No.\,11131003 ) and 985 project.

\end {document}